\DeclareMathOperator{\Length}{Length}
\newtheorem{thm}{Theorem}[section]
\newtheorem{lem}[thm]{Lemma}
\newtheorem{defn}{Definition}[section]
\numberwithin{equation}{section}
\newcommand{\ve}{\varepsilon}
\def\Pb{\ifmmode{\Bbb P}\else{$\Bbb P$}\fi}
\def\Z{\ifmmode{\Bbb Z}\else{$\Bbb Z$}\fi}
\def\C{\ifmmode{\Bbb C}\else{$\Bbb C$}\fi}
\def\R{\ifmmode{\Bbb R}\else{$\Bbb R$}\fi}
\def\S{\ifmmode{S^2}\else{$S^2$}\fi}
 \DeclareMathOperator{\proj}{proj}
\def\S{\cal S}
\newenvironment{pf}{\paragraph{Proof:}}{\hfill$\square$ \newline}
\begin{document}

\title[nonplanar ancient CSF in $\R^3$ from reapers]{Nonplanar ancient curve shortening flows in $\R^3$ from grim reapers}
\author{Theodora Bourni and Alexander Mramor}
\address{Department of Mathematics, University of Tennessee Knoxville,
Knoxville TN, 37996
}
\email{tbourni@utk.edu}
\address{Department of Mathematics, Johns Hopkins University, Baltimore MD, 21231}
\email{amramor1@jhu.edu}

\begin{abstract} In this note we construct new nonplanar ancient (in fact, eternal) solutions to the curve shortening flow in $\R^3$, built out of translating grim reapers laying in perpendicular planes. 
\end{abstract}
\maketitle
\section{Introduction}

Ancient solutions to the mean curvature flow (MCF), which for curves is also called the curve shortening flow (CSF), serve as models for singularities which may develop after a rescaling process and are also the natural analogues of complete solutions to the heat equation in submanifold geometry. There are many known examples of ancient CSF/MCF in $\R^n$, many of which are nonplanar, see for instance \cite{AAAW, LT}. The quality of nonplanarity is interesting for instance because there are a number of recent results for the mean curvature in higher codimension where one can constrict ancient solutions to some proper affine subspace of $\R^n$ under some conditions, see for example \cite{Cal, CM, Naff}. In particular, a nonplanar curve shortening flow in $\R^3$ takes up as much ``room'' as possible. The  point of this note is to construct another such example by, in short, attaching grim reapers in different planes along their ends. To summarize the main properties of the construction: 
\begin{thm} There exists a smooth noncompact curve shortening flow $M_{t}$, $t \in (-\infty, 0]$, such that:
\begin{enumerate} 
\item $M_t$ doesn't lay in any affine plane for any $t \in (-\infty, 0]$.
\item In the limit as $t \to - \infty$, $M_t$ converges to three parallel lines, which pairwise are at most distance $\sqrt 2\pi$ apart.
\item $M_t$ is a ramp, in the sense of Altschuler and Grayson (see Definition \ref{ramp}). 
\end{enumerate}
\end{thm}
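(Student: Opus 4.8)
The plan is to obtain $M_t$ as a smooth limit of curve shortening flows on the intervals $[-k,0]$, $k\to\infty$, each flow starting from an explicit curve assembled from two translating grim reapers in perpendicular planes. Fix the lines $\ell_1,\ell_2,\ell_3$ parallel to $e_3$ through $(-\pi,0)$, $(0,0)$ and $(0,\pi)$, whose pairwise distances are $\pi$, $\pi$ and $\sqrt2\,\pi$, and let $G_1\subset\{y=0\}$ and $G_2\subset\{x=0\}$ be the width-$\pi$ grim reapers asymptotic to $\ell_1\cup\ell_2$ and to $\ell_2\cup\ell_3$ respectively, each translating in the $+e_3$ direction, so that the $G_i+s\,e_3$, $s\in\R$, are eternal solutions. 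Both $G_i$ are graphs over the line $\R\nu$, $\nu=\tfrac1{\sqrt2}(e_1+e_2)$ --- indeed ramps with respect to $\nu$ in the sense of Definition~\ref{ramp}, with $\langle T,\nu\rangle>0$ throughout --- while each $\ell_i$ is ``vertical'' over $\R\nu$. Thus the configuration $G_1\cup G_2$, joined along $\ell_2$, is morally an eternal ramp over $\R\nu$ whose graph function blows up precisely over the three $\nu$-coordinates of the $\ell_i$; since $G_1$ and $G_2$ are only asymptotic to $\ell_2$ and never meet at a finite point, there is no smooth curve equal to $G_1\cup G_2$, so $M_t$ has to come from a limiting argument rather than from a direct gluing.

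For each large $k$ I would fix a smooth complete embedded ramp $\Gamma_k$ over $\R\nu$ that agrees with $G_1$ on the portion of that reaper at $e_3$-height $\le k$ near $\ell_1$, agrees with $G_2$ on the corresponding portion near $\ell_3$, and interpolates between the two through a ``cap'' near $\ell_2$ at height comparable to $k$, the interpolation chosen so that the ramp constant of $\Gamma_k$ (a lower bound for $\langle T,\nu\rangle$ over any fixed compact $\nu$-interval) and the diameter of $\Gamma_k$ over any such interval are bounded independently of $k$. Flowing $\Gamma_k$ forward gives a smooth solution $M^k_t$, $t\in[-k,0]$: the ramp condition with respect to $\nu$ is preserved by the theory of ramps (in the sense of Altschuler--Grayson), and, comparing with the eternal translators $G_i+s\,e_3$ in the graph picture, the parts of $M^k_t$ away from the cap remain uniformly close to honest translators (with error tending to $0$ as $k\to\infty$), while the cap stays confined near $\ell_2$.

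The technical core, and the step I expect to be the main obstacle, is to prove \emph{$k$-uniform local curvature and higher-derivative estimates} for the $M^k_t$ on compact subsets of $\R^3\times(-\infty,0]$, the crucial point being to exclude curvature concentration in the cap region as $k\to\infty$; the ramp structure supplies uniform control of $\langle T,\nu\rangle^{-1}$, and interior parabolic estimates for the graph system then promote uniform $C^0$ and $C^1$ bounds to bounds on all derivatives. Granting these, a diagonal subsequence of $\{M^k_t\}$ converges smoothly on compact sets to a curve shortening flow $M_t$, $t\in(-\infty,0]$ (and, flowing $M_0$ forward, to an eternal flow, ramps having long-time existence). Properties (1)--(3) then follow. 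The flow $M_t$ is a ramp over $\R\nu$, since the ramp condition passes to the limit and $\langle T,\nu\rangle$ vanishes nowhere by the strong maximum principle. No affine plane contains $M_t$, since near one end it is asymptotic to $G_1\subset\{y=0\}$ and near the other end to $G_2\subset\{x=0\}$, and no plane contains the relevant parts of both. And as $t\to-\infty$ the two translator pieces converge, at each fixed height, to the pairs of lines to which they are asymptotic, while the $U$-bottoms of the reapers have $e_3$-height tending to $-\infty$ and the cap leaves every compact set, so $M_t\to\ell_1\cup\ell_2\cup\ell_3$, three parallel lines whose pairwise distances are $\pi,\pi,\sqrt2\,\pi$, each at most $\sqrt2\,\pi$. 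The delicate points that remain are the uniform-in-$k$ design of the caps together with the exclusion of curvature build-up there, and the precise asymptotic control needed to pin down the $t\to-\infty$ limit.
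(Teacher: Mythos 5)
Your setup (two width-$\pi$ grim reapers in perpendicular planes, joined through a cap, viewed as ramps over a fixed direction, with old-but-not-ancient flows on $[-k,0]$ passed to a limit) is essentially the paper's, and the ramp condition does deliver uniform curvature bounds via the maximum principle for $\kappa/\langle T,V\rangle$. However, there is a genuine gap at the point where you write ``Granting these, ... Properties (1)--(3) then follow.'' Uniform curvature estimates ensure only that a subsequential limit flow exists; they give no information on whether that limit is nonempty, connected, or nonplanar. The danger, which the paper emphasizes, is that after time-translating by $T_k\to\infty$ the ``tips'' of the reapers may have receded to infinity (leaving a union of stationary lines), or may have collapsed too far, and separately the two pieces could twist so that the limit lies in a single affine plane. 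Your claim that ``the parts of $M^k_t$ away from the cap remain uniformly close to honest translators'' is asserted, not proved, and the obvious comparison (using grim planes as outer barriers) does not work: there is no way to place them disjointly from the initial data. Likewise, asserting the limit is asymptotic to $G_1\subset\{y=0\}$ near one end and $G_2\subset\{x=0\}$ near the other presupposes the nontriviality you are trying to establish.

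The paper's argument for nontriviality is the content you are missing, and it has two ingredients neither of which appears in your proposal. First, convex MCF hypersurfaces are barriers for CSF in $\R^3$ (Lemma~\ref{convbarrier}), so one can sandwich each reaper piece between Angenent cylinders (products of Angenent ovals with a line); this keeps the two pieces in thin, slowly-widening slabs (controlled by Lemma~\ref{ovalfacts}) and hence close to perpendicular planes, which is how nonplanarity is ultimately quantified. Second, and crucially, a Gauss--Bonnet ``area estimate'' (Lemma~\ref{Areaest}) shows that the area of a minimal disk spanned by a compact piece of the curve together with a chord decreases at rate $\le -\pi + \ve(t)$, with $\ve$ integrable thanks to the slab control; starting from area $\approx \pi R_i$ and evolving for time $\approx R_i$, one gets a uniform bound on this area at the final time, which via a tubular-neighborhood/length argument pins the tips at a bounded distance from the origin at time $0$. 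This is what rules out the limit being a union of lines. Your proposal treats these as ``delicate points that remain,'' but they are the core of the proof, and no mechanism in your sketch (ramp estimates, interior parabolic estimates, informal comparison with translators) substitutes for them.
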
 
The properties listed above correspond to a construction of an ancient solution modeled on combining two grim reapers translating with speed 1, but it will be evident that one can use more grim reapers and additionally vary their widths with appropriate modifications (including modifying item (2) above, where the number of asymptotic lines will increase). The flows we construct will also clearly be extendable to an eternal solution, that is a solution defined for $t \in \R$ although this is perhaps of lesser note; in the limit as $t \to \infty$ it will converge to a line. 
$\medskip$

The path we take, as is often the case in these types of constructions, is to first construct approximating ``old but not ancient'' solutions (often referred to below as simply approximating solutions) $M^i_t$ defined on $(-T_i, 0]$ where $T_i \to \infty$, take a limit of these via curvature estimates, and show the limit is nonempty and furthermore nontrivial (in this case, nonplanar). The approximate solutions are constructed by piecing two grim reapers together, where more or less one lays in the $xy$-plane and the other lays in the $yz$-plane, interpolating in the middle and, as we'll discuss more below, bending the curves far from the origin to make the use of the maximum principle on these noncompact curves simpler; since these regions where we bend are farther and farther away from the origin for each $i$, in the limit they are blown off to spatial infinity. The limit then, as we will show, is an ``ancient trombone'' -- see Figure 1, which can arguably be thought of as a higher codimension analogue of the examples in \cite{AY}. 
$\medskip$

\begin{figure}
\centering
\includegraphics[scale = .45]{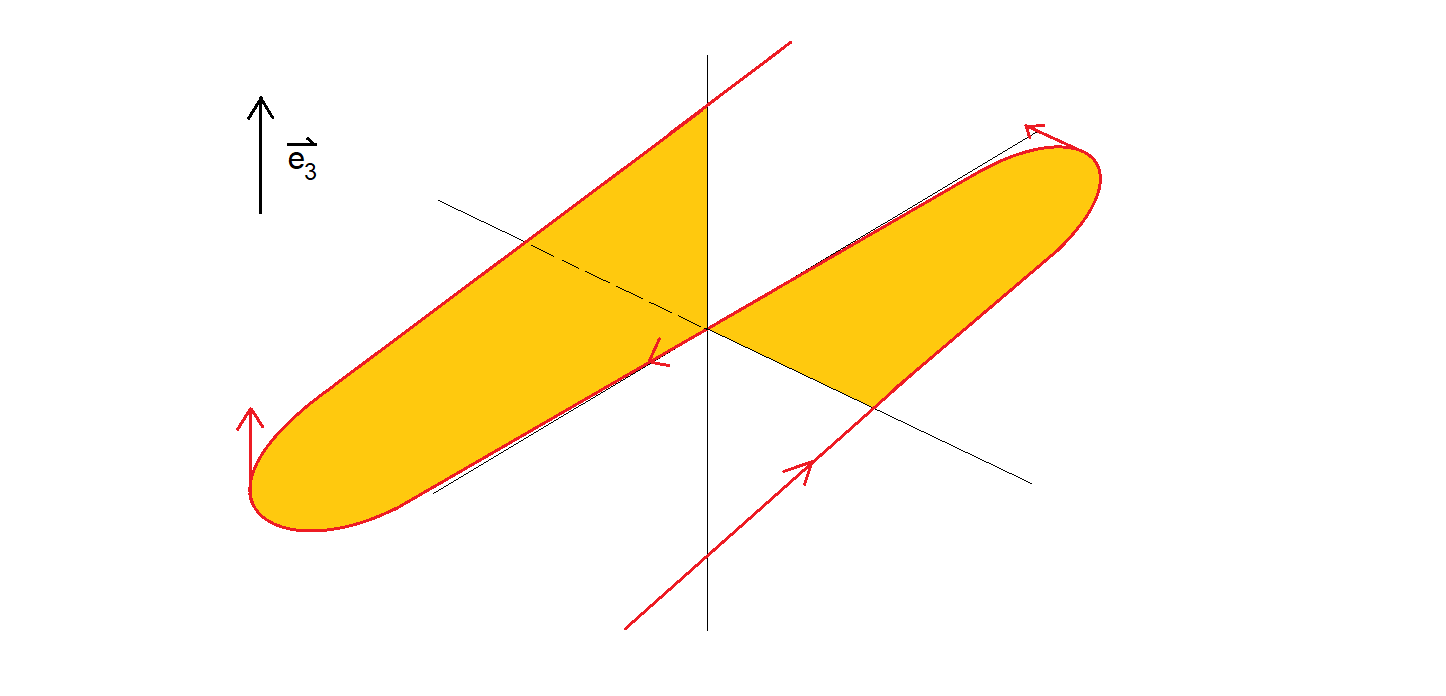}
\caption{A sketch of what the ancient solution looks like. The regions above are shaded just to make the diagram appear less ``flat'', although coincidentially these do essentially represent the regions to which the area estimate below (Lemma \ref{Areaest}) is applied. The arrows  of course indicate the tangent to the curve.}
\end{figure}
$\medskip$

To take a limit, one uses that the approximate solutions are ramps in the sense of Altschuler and Grayson \cite{AG}, a notion that is discussed more below (see Definition \ref{ramp}). By the maximum principle we can then get uniform curvature estimates. It will be easy to see that the limit flow is nonempty but what might be less clear is that it is nonplanar; for instance it might be feared that the limit is a stationary line (indeed, this is the asymptotic behavior one expects as $t \to \infty$), or a union of stationary lines if the ``tips'' of the approximate grim reapers don't move in fast enough. To deal with this, one tool we use is perhaps the lesser known fact that flows of convex hypersurfaces are barriers to the curve shortening flow. Of course for general hypersurfaces this does not hold true, considering for instance a round circle wrapped around the neck of a catenoid. By using what we refer to as Angenent cylinders, taking products of Angenent ovals and $\R$, and constructing the initial data for the approximate solutions with some care, one can ensure the limit doesn't lay in a plane.  This is roughly done by showing that the two orthogonal approximate grim reapers don't ``twist'' out of their initial planes (or perhaps more correctly, one shows that they stay in thin slabs) too much for a very long time. This rules out at least the limit laying in a plane, but it could still be that the limit is a union of lines -- where, geometrically speaking, this would be the case if  the ``tips'' of the attached grim reapers don't move in quickly enough. Naively one would imagine trying to rule this out by using grim planes as ``outer barriers,'' but it seems impossible to arrange them in a way which keeps them disjoint from the initial data. To get around this, we use the detailed asymptotics of grim reapers and an area estimate method using Gauss-Bonnet (one instance where the ambient dimension $n =3$ is used) to show that the tips move in quickly, and thus showing that the ancient solution constructed is indeed a legitimate nonplanar ancient curve shortening flow -- i.e. not just a union of ancient planar CSFs laying in different planes. 
$\medskip$

One imagines that related compact constructions are possible, by ``tying'' up all the ends using additional grim reapers (it's easy to come up with plausible candidates). Such examples wouldn't be ramps though, so modifications to our argument would be necessary. Because our examples have simple blowdowns, one might imagine with some modifications/generalizations one may be able to use them to obtain ``good'' bounds for the constants in the statement of Corollary 0.6 of \cite{CM}, which give condimension bounds for ancient flows in terms of their entropy. 
$\medskip$ 

$\textbf{Acknowledgements:}$ The authors thank Jacob Bernstein for pointing out the possible applications to \cite{CM} mentioned above, as well as Mat Langford for helpful discussions on constructions of related ancient flows. At the time of writing T.B. was supported by grant NSF-DMS 2105026 and 707699 Simons collaboration grant, and A.M. by an AMS-Simons travel grant; we thank them for their support. We also thank the anonymous referee for their careful reading and comments. 

\section{Preliminaries}

Before actually describing the approximate solutions in more detail, we first lay out the tools we need to control these solutions, which naturally put constraints on how they should be constructed. The first facts and notions we discuss are taken from section 2 of Altschuler and Grayson's paper \cite{AG}. The first definition is a natural generalization of graphicality in higher condimension.
$\medskip$

\begin{defn}\label{ramp} Let $\gamma \subset \R^3$ be an embedded, arclength parameterized, curve with tangent vector $T$. Then $\gamma$ is said to be a ramp if $\langle T, V \rangle \geq 0$ for some vector $V$. 
\end{defn} 

The evolution equation for $\langle T, V \rangle$ is given by the following, where $\kappa$ is the geodesic curvature and $s$ is the arclength parameter.
\begin{equation}
\frac{\partial}{\partial t} \langle T, V \rangle = \frac{\partial^2}{\partial s^2} \langle T, V \rangle + \kappa^2  \langle T, V \rangle\,. 
\end{equation} 
Even though there are noncompact maximum principles available, in the argument below we will construct our approximate solutions in a way that ensures $ \langle T, V \rangle $ is \emph{very} positive and stays so for a long time by pseudolocality far away from the origin. Therefore, one may apply the maximum principle as in the compact case to see that this quantity stays positive and in particular its minimum doesn't decrease. The following evolution equation gives that lower bounds of $ \langle T, V \rangle$ along with initial upper bounds on $\kappa$ give bounds on $\kappa$ in later times.
\begin{equation} \label{ktv}
\frac{\partial}{\partial t} \frac{\kappa}{ \langle T, V \rangle} = \frac{\partial^2}{\partial s^2}\frac{\kappa}{ \langle T, V \rangle} + 2 \frac{2}{ \langle T, V \rangle} \frac{\partial}{\partial s} \langle T, V \rangle \frac{\partial}{\partial s}\frac{\kappa}{ \langle T, V \rangle} - \frac{\kappa}{ \langle T, V \rangle} \tau^2\,,
\end{equation} 
where $\tau$ here is the torsion.
Again, we will construct our approximating solutions in such a way to ensure that, from \eqref{ktv}, $\frac{\kappa}{ \langle T, V \rangle}$ must be non increasing by the classical maximum principle (so without having to resort to noncompact ones). 
$\medskip$

In the construction, the following simple but important ``area estimate'' will be used to essentially give a lower bound on the speed of the tips of the approximate solutions, and thus ensuring that the limiting flow is not just a union of lines. Below, let $\{\tilde\Gamma_t\}_{t\in [0, \infty)}$ be a curve shortening flow in $\R^3$ let $\Gamma_t$ be a compact piece of $\tilde \Gamma_t\cap\{y<0\}$, let $A_t, B_t\in \{y=0\}$ be the endpoints of $\Gamma_t$ and let $\ell_t$ be the line segment joining them to create, along with $\Gamma_t$, a closed piecewise smooth curve $D(t)$:

\begin{lem}\label{Areaest} Let $A(t)$ denote the area of a minimal surface bounded by $D(t)$, with $D(t)$ as above, $t\in [0, T)$. Then, if the interior angles at $A_t, B_t$ are bounded by $\pi +\ve(t)$, we have
\[
 \frac{d A(t)}{dt}\le -\pi+\ve(t)\,.
\]
\end{lem}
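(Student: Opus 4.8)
The plan is to differentiate the area $A(t)$ of a minimal surface spanning the piecewise-smooth boundary curve $D(t) = \Gamma_t \cup \ell_t$, and to identify the rate of change as a combination of a boundary-motion term coming from the curve shortening flow along $\Gamma_t$ and a turning-angle term governed by Gauss--Bonnet. The key point is that the first variation of area under a normal boundary displacement with velocity $\vec V$ is $\int_{\partial} \langle \vec V, \eta\rangle\,ds$, where $\eta$ is the outward conormal; along the moving part $\Gamma_t$ the boundary moves with the CSF velocity $\kappa N$ (curvature vector), and along $\ell_t \subset \{y = 0\}$ the endpoints $A_t, B_t$ slide but the segment contributes nothing to lengthening the spanned area to first order in a way that helps — one must be slightly careful here, which is exactly why the interior-angle hypothesis enters. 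So the first step is to write $\frac{dA}{dt} \le \int_{\Gamma_t} \langle \kappa N, \eta\rangle\,ds$ plus a controlled endpoint contribution, using that a genuine area-minimizer can only do better than any competitor obtained by flowing the spanning surface.

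Second, I would relate $\int_{\Gamma_t} \langle \kappa N, \eta\rangle\,ds$ to the total geodesic curvature of $\Gamma_t$ inside the minimal surface. On the minimal surface $\Sigma$, the curve $\Gamma_t$ bounds (together with $\ell_t$) the disk-type region whose area is $A(t)$; since $\Sigma$ is minimal its Gauss curvature $K_\Sigma \le 0$, and the Gauss--Bonnet theorem on this region gives
\[
\int_{\Gamma_t} \kappa_g\,ds + \int_{\ell_t} \kappa_g\,ds + \sum (\text{exterior angles}) + \int K_\Sigma \, dA = 2\pi\,.
\]
The term along $\Gamma_t$ is (up to sign conventions) exactly what appears in the first-variation formula, since the CSF velocity $\kappa N$ is the geodesic curvature vector of $\Gamma_t$ as a curve in $\R^3$ and its conormal component is what contributes; the term along $\ell_t$ I would bound by noting $\ell_t$ is a line segment in $\R^3$, hence its geodesic curvature contribution in $\Sigma$ is controlled (a straight segment has zero curvature in $\R^3$, so its geodesic curvature in $\Sigma$ comes only from the normal component, whose conormal projection I can estimate or absorb). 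The exterior angles at $A_t$ and $B_t$ are $\pi$ minus the interior angles, hence each is $\ge -\ve(t)$ by hypothesis, so $\sum(\text{exterior angles}) \ge -2\ve(t)$... — at this point, combining $\int K_\Sigma\,dA \le 0$ with the angle bound yields $\int_{\Gamma_t}\kappa_g\,ds \le 2\pi + 2\ve(t) - (\text{segment term})$, but I actually want the sharper constant $-\pi$, so I should instead decompose the closed curve $D(t)$ more cleverly — perhaps only the ``genuinely curved'' half contributes, or one exploits that $\Gamma_t$ is (asymptotically) a translating grim reaper whose tangent rotation is essentially $\pi$, giving the total turning along $\Gamma_t$ close to $\pi$ rather than $2\pi$.

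Third, I would assemble: $\frac{dA}{dt} \le -\int_{\Gamma_t}\kappa_g\,ds$ (with the sign fixed so that curvature pointing into the region decreases area), and then plug in the Gauss--Bonnet bound $\int_{\Gamma_t}\kappa_g\,ds \ge \pi - \ve(t)$ to conclude $\frac{dA}{dt} \le -\pi + \ve(t)$. The main obstacle, I expect, is getting the constant exactly right — i.e.\ tracking orientations and sign conventions in the first-variation formula and in Gauss--Bonnet so that the angle defect contributes $-\pi$ and not $-2\pi$ or $0$, and making sure the contribution of the straight segment $\ell_t$ and of the negative Gauss curvature of the minimal surface both have the favorable sign. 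A secondary technical point is justifying that the area-minimizer $A(t)$ is differentiable (or at least that its upper Dini derivative satisfies the stated bound), which is standard but requires a competitor-surface argument: flow the current minimal surface by the CSF of its boundary for a short time to produce an admissible competitor whose area has the right derivative, and use minimality to pass to $A(t+h)$.
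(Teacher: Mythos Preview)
Your overall approach---first variation of area followed by Gauss--Bonnet on the minimal disk, using $K_\Sigma\le 0$---is exactly what the paper does. But two of your intermediate steps are muddled in ways that the paper's three-line argument avoids.

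First, the straight segment $\ell_t$ contributes \emph{exactly} zero to $\int_{\partial}\kappa_g$. The geodesic curvature of a curve in the surface is the component of its ambient curvature vector along the conormal; since $\ell_t$ is a straight line in $\R^3$ its curvature vector vanishes identically, so $\int_{\ell_t}\kappa_g=0$ with nothing to ``estimate or absorb.'' This removes the uncertainty you express in step two.

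Second, your angle bookkeeping goes in the wrong direction and then leads you to speculate about an unnecessary fix. From Gauss--Bonnet with $\int K\le 0$ and $\int_{\ell_t}\kappa_g=0$ you get
\[
\int_{\Gamma_t}\kappa_g \;\ge\; 2\pi - \sum(\text{exterior angles}).
\]
What you need is an \emph{upper} bound on the sum of exterior angles, not the lower bound $\sum\ge -2\ve(t)$ you derive. In the intended geometric setup each interior angle is close to $\pi/2$ (the curve $\Gamma_t$ leaves and re-enters the hyperplane $\{y=0\}$ nearly orthogonally, while $\ell_t$ lies in that hyperplane), so each exterior angle is close to $\pi/2$ and the sum of the two exterior angles is at most $\pi+\ve(t)$. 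Plugging in gives $\int_{\Gamma_t}\kappa_g\ge \pi-\ve(t)$ directly---no appeal to grim-reaper asymptotics or ``clever decomposition'' is needed. Combined with $\frac{dA}{dt}=-\int_{\Gamma_t}\kappa_g$, this is the whole proof.
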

\begin{proof}
Note first that
 \[
 \frac{d A(t)}{dt}=-\int_{\Gamma_t}\kappa_g
 \]
where $\kappa_g$ is the geodesic curvature of the boundary curve, which is equal to the curvature vector of $\Gamma_t$ dotted with the conormal of the minimal disk at the boundary. By Gauss-Bonnet (minimality used here):
 \[
 2\pi\le \int_{\Gamma_t}\kappa_g+\pi+\ve(t)
 \]
 and therefore
  \[
 \frac{d A(t)}{dt}=-\int_{\Gamma_t}\kappa_g \le -\pi+\ve(t)\,.
 \]
\end{proof}

To employ this area estimate, we will need pretty tight control of the approximate solutions we construct; the following lemmas and facts are involved in this control. To begin, the following lemma says that we can use convex two dimensional mean curvature flows as barriers for curve shortening flows:

\begin{lem}\label{convbarrier} Let $\Gamma_t$ be a curve shortening flow and $M^2_t$ a convex solution to mean curvature flow which are initially disjoint, and which in later times, $t \in [0, T)$, $T \leq \infty$, might only intersect in a bounded region. Then, in fact, they remain disjoint for $t \in [0,T)$. 
\end{lem}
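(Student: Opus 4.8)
The plan is to use the maximum principle / avoidance principle for mean curvature flow, but applied carefully since the curve $\Gamma_t$ is noncompact and the intersection region, while bounded, is where the danger lies. The core idea: the distance function between the two flows, or rather the signed distance to the convex region $\Omega_t$ bounded by $M^2_t$, is a supersolution/subsolution of a suitable parabolic equation along $\Gamma_t$, so its minimum cannot cross zero. The convexity of $M^2_t$ is exactly what makes the signed distance to $\Omega_t$ well-behaved (in particular the nearest-point projection onto $\Omega_t$ is well-defined and $1$-Lipschitz, and $\R^3 \setminus \Omega_t$ is foliated nicely), which is why a general $M^2_t$ — e.g. a circle around a catenoid neck — fails.

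First I would set up notation: let $\Omega_t$ be the compact convex region with $\partial \Omega_t = M^2_t$ (a convex surface flowing by MCF bounds a convex body), and let $d(t) = \dist(\Gamma_t, \Omega_t) = \min_{p \in \Gamma_t} \dist(p, \Omega_t)$, which is positive at $t=0$ by hypothesis. I would argue that the minimum in the definition of $d(t)$ is attained — here is where the "might only intersect in a bounded region" hypothesis does work: outside a fixed bounded set $K$, $\Gamma_t$ is uniformly far from all $\Omega_s$, $s \in [0,T)$, so any realizing sequence stays in a compact set and a minimizer exists (one should phrase this so that $\Omega_t$ staying within a bounded set for $t$ in compact subintervals is used, which holds since convex MCF shrinks). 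Then, for $t$ in the open interval where $d(t) > 0$, a standard computation — e.g. Hamilton's trick for the evolution of a distance between two flows, as in Angenent or Ecker–Huisken — gives that at a point realizing the minimum, $d$ satisfies $\frac{d}{dt} d(t) \geq 0$ in the sense of forward difference quotients, using that $\Gamma_t$ moves by $\kappa_g$ (its curvature vector) and $M^2_t$ by its mean curvature vector, and that the segment realizing the distance meets both perpendicularly, with the convexity of $\Omega_t$ ensuring the second-derivative (spatial Laplacian) terms have the right sign. Hence $d(t) \geq d(0) > 0$ for all $t \in [0,T)$, which is the claim.

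The cleanest way to organize the argument avoiding the noncompactness headache entirely is a continuity/openness–closedness argument on $[0,T)$: let $I = \{t : \Gamma_s \cap \Omega_s = \emptyset \text{ for all } s \le t\}$. This set is nonempty ($0 \in I$) and relatively closed in $[0,T)$ by continuity of the flows on compact subsets. For openness, suppose $t_0 \in I$; near $t_0$ the two flows still only (potentially) meet inside the fixed bounded region $K$, so one truncates $\Gamma_t$ to a compact piece $\Gamma_t \cap B_R$ with $R$ large, notes the truncated piece stays uniformly away from $\Omega_t$ near its (fixed-location) endpoints, and applies the usual compact avoidance principle / strong maximum principle to the distance between $\Gamma_t \cap B_R$ and $M^2_t$; this gives disjointness for $t$ slightly beyond $t_0$. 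Therefore $I = [0,T)$.

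The main obstacle I expect is making the noncompact maximum-principle step rigorous: one has to ensure the minimizing point in $\dist(\Gamma_t, \Omega_t)$ does not escape to spatial infinity, and that near the "boundary" of the truncated curve no spurious minimum appears. This is exactly what the hypothesis "might only intersect in a bounded region in later times" is there to rule out, and I would make it quantitative by fixing, on each compact time subinterval $[0, t_1] \subset [0, T)$, a ball $B_R$ outside of which $\dist(\Gamma_t, \Omega_t) \geq \delta > 0$ uniformly — a property that should either be part of what "only intersect in a bounded region" means or follows from the specific geometry (the ends of $\Gamma_t$ being essentially straight rays, the $\Omega_t$ being bounded). Everything else is the standard avoidance principle, whose only subtlety is the sign of the curvature terms, handled by convexity of $M^2_t$.
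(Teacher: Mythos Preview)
Your distance--function / avoidance--principle outline is essentially correct and would go through: writing $u = \phi(\gamma(s,t),t)$ for $\phi$ the signed distance to the convex region $\Omega_t$, one computes $(\partial_t - \partial_s^2) u = H(\pi(\gamma)) - T^{\top}\nabla^2\phi\, T$, and at a spatial minimum (where $T\perp \nabla\phi$) the Hessian term is bounded above by a single normal curvature $k_n \le k_1 + k_2 = H$ precisely because both principal curvatures are nonnegative. The bounded--intersection hypothesis then localizes the minimum as you say. One small slip: you repeatedly assume $\Omega_t$ is compact (``a convex surface flowing by MCF bounds a convex body'', ``the $\Omega_t$ being bounded''), but the barriers actually used in the paper are Angenent cylinders $a_t\times\R$, which bound a noncompact convex slab; fortunately your argument only needs that $\Omega_t$ is closed and convex so that nearest--point projection and the Hessian formula for $\phi$ behave, and the ``bounded region'' hypothesis still pins down the minimizer.

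The paper takes a genuinely different, more local route. Rather than tracking a global distance, it observes that for any affine plane $P$ the slice $P\cap M_t$ is a \emph{subsolution} to planar CSF --- it moves inward at least as fast as its own planar curvature, the excess coming exactly from the transverse positive principal curvature (this is the content of the convexity hypothesis; cf.\ \cite{BLL}). Then at a putative first touching point $p$ one chooses $P$ to be the osculating plane of $\Gamma_t$ at $p$, so that the curvature vector of $\Gamma_t$ there agrees with that of $\proj_P(\Gamma_t)$; comparing $\proj_P(\Gamma_t)$ (a genuine CSF at $p$) against $P\cap M_t$ (a strict subsolution) in the plane $P$ gives the contradiction. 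This buys a two--line proof with no distance--function bookkeeping and no separate handling of the noncompactness, at the cost of invoking the slice--subsolution fact. Your approach trades that lemma for the standard Hessian--of--distance computation and is closer to the textbook avoidance principle, which makes the role of convexity (the inequality $k_n\le H$) very transparent.
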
 

\begin{proof}
Let $P$ be any affine plane and note that $P\cap M_t$, as long as it is not empty, is a subsolution to CSF in P (moves faster than curvature), since the other curvatures in transverse directions are positive by convexity -- for a more general statement and detailed argument see Lemma 2.3 in \cite{BLL}. So assume that there is a first time $t$ when $\Gamma_t\cap M_t\ne \emptyset$ and let $p\in \Gamma_t\cap M_t$. Let $P$ be the osculating plane of $\Gamma_t$ at $p$. Then the curvature vector of $\Gamma_t$ at $p$ coincides with the curvature (times the normal) of the projection of $\Gamma_t$ on P. Therefore, at $p$, $\proj_P(\Gamma_t)$  moves slower then $P\cap M_t$, which contradicts the fact that $t$ is the first time that $\Gamma_t$ and $M_t$ intersect.
\end{proof}

Now we discuss some already known solutions to the mean curvature flow which play important role in our construction. As is certainly clear, grim reaper translators play a central role in our construction. The grim reaper $G$ of width $\pi$ and speed 1 is given by the graph of  $y =  \ln \cos{x}$, where $x$ ranges between $\pm \frac{\pi}{2}$; we say it has width $\pi$ because it is asymptotic to two parallel lines distance $\pi$ apart (in this case, the lines $x = \pm \frac{\pi}{2}$). Its flow $G_t$ exists for all time and translates downward with speed 1 along the $y$-axis, given by the graph $y = \ln \cos{x} - t$. Closely related to grim planes (products of grim reapers with lines) are the Angenent cylinders, which we will extensively use as barriers. The  Angenent cylinders $A_t$ are solutions of the form $a_t \times \R$, where $a_t$ denotes the Angenent oval of width $\pi$, $t \in (-\infty, 0]$, given explicitly by $\{\cos x= e^t\cosh y\}$. In the following lemma we summarize its relevant properties.
$\medskip$

\begin{lem}\label{ovalfacts} Let $A_t$ be the Angenent cylinder oriented so that $a_t$ is in the $xy$ plane with its semi-major axis laying along the $y$-axis (see also section 2 of \cite{BLT}).
\begin{enumerate}
\item $A_t$ is convex. 
\item As $t \to -\infty$, $a_t$ converges to two opposite facing grim reapers of width $\pi$ translating, in opposite directions,  along the $y$-axis. 
\item Denote by $d_S (A_t)$ the distance of $A_t$ from the boundary of the slab with sides $x = \pm \frac{\pi}{2}$. Then $d_S(A_t) < 2e^{t}$, for any $t\in (-\infty, 0)$.  
\end{enumerate} 
\end{lem}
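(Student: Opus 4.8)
The plan is to verify the three items directly from the explicit formula $a_t=\{\cos x=e^t\cosh y\}$; items (1) and (2) are classical facts about the Angenent oval (see section 2 of \cite{BLT}), so I will only indicate the short verifications, while (3) is an elementary estimate. For (1): since $\cosh y\ge 1$, every point of $a_t$ satisfies $\cos x=e^t\cosh y\ge e^t$, so $a_t\subset\{|x|\le x_0\}$ with $x_0:=\arccos(e^t)<\tfrac\pi2$, and $a_t$ is the union of the graphs $y=\pm g(x)$, $g(x):=\cosh^{-1}(e^{-t}\cos x)$, over $|x|\le x_0$. Writing $h:=e^{-t}\cos x\ge 1$, a one-line computation gives $g''=-\,h\,(e^{-2t}-1)\,(h^2-1)^{-3/2}\le 0$ for $t\le 0$, so the region $\{|y|\le g(x)\}$ bounded by $a_t$ is convex; hence $a_t$ is a convex curve and $A_t=a_t\times\R$ is a convex hypersurface in $\R^3$, which is the sense in which convexity enters Lemma \ref{convbarrier}.

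For (2): I would compute the limit of $\{\cos x=e^t\cosh y\}$ as $t\to-\infty$ near the two tips (the points with $x=0$). Near the ``northern'' tip, substitute $y=-t+\ln 2+w$; then $e^t\cosh y=e^{w}+\tfrac14 e^{2t-w}$, so in the $(x,w)$ coordinates $a_t\to\{\cos x=e^{w}\}$ in $C^\infty_{\mathrm{loc}}$, the width-$\pi$ grim reaper $w=\ln\cos x$ opening downward; since this tip lies at height $y\approx -t\to+\infty$ and $A_t$ evolves by vertical translation, it is a grim reaper moving in the $+y$ direction as $t$ increases. The symmetric substitution $y=t-\ln 2-w$ near the ``southern'' tip yields convergence to the width-$\pi$ grim reaper $-(y-t+\ln 2)=\ln\cos x$, which opens upward and, since its tip is at $y\approx t\to-\infty$, moves in the $-y$ direction — hence the two limit reapers translate in opposite directions. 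Off the tips the curve escapes to $y=\pm\infty$, so these are the only limits.

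For (3): by the computation in (1) the $x$-extent of $a_t$ is exactly $[-x_0,x_0]$ (with equality only at $y=0$) and $A_t$ attains $x=\pm x_0$, so the distance from $A_t$ to the boundary of the slab $\{x=\pm\tfrac\pi2\}$ is
\[
 d_S(A_t)\;=\;\tfrac\pi2-x_0\;=\;\tfrac\pi2-\arccos(e^t)\;=\;\arcsin(e^t)\,.
\]
Since $u\mapsto\arcsin u$ is convex on $[0,1]$ with $\arcsin 0=0$ and $\arcsin 1=\tfrac\pi2$, its graph lies below the chord joining these endpoints, so $\arcsin(e^t)\le\tfrac\pi2 e^t<2e^t$ for all $t\in(-\infty,0)$, as claimed.

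The only point needing care — the modest ``main obstacle'' here — is the bookkeeping in (2): choosing the shifts so that both tips converge to honest width-$\pi$ grim reapers and checking that the resulting translations point in opposite directions along the $y$-axis. The remaining assertions are immediate from the explicit form of $a_t$.
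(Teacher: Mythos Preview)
Your verification is correct and in fact supplies more than the paper does: the paper states this lemma without proof, treating items (1)--(3) as a summary of known properties of the Angenent oval (with a pointer to section~2 of \cite{BLT}), so there is no ``paper's proof'' to compare against. Your explicit computations from the defining equation $\cos x=e^t\cosh y$ are exactly the right way to justify the claims, and your bound in (3), $d_S(A_t)=\arcsin(e^t)\le\tfrac{\pi}{2}e^t$, is even a little sharper than the stated $2e^t$.

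One small slip in (2): your direction bookkeeping is reversed. The northern tip sits at $y=-t+\ln 2$, so $dy/dt=-1$ and this reaper moves in the $-y$ direction as $t$ increases (consistent with the oval shrinking toward the origin); symmetrically the southern tip moves in the $+y$ direction. This does not affect your conclusion that the two limiting reapers translate in opposite directions, which is all the lemma asserts, but you should correct the sentence claiming the northern reaper moves in the $+y$ direction. Also, the phrase ``$A_t$ evolves by vertical translation'' is not quite right --- the Angenent cylinder itself does not translate; it is the limiting grim reapers that do --- so reword that clause.
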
 

 The following lemma will be used to control the contribution from the interior angles of the surface on which we will apply the area estimate above (to the shaded regions in Figure 1). In practice, the lemma will apply because we'll have uniform curvature bounds along the flow which we can then scale to be bounds by 1.
\begin{lem}\label{smallangle} Suppose $\gamma$ is a length parameterized curve whose curvature is bounded by $1$ contained in the intersection of two orthogonal slabs of width $\epsilon > 0$ which contains the $y$-axis. Then $|\langle T, v \rangle | <   3\sqrt{\epsilon}$, where $T$ is the unit tangent vector to $\gamma$ and $v$ is any unit vector perpendicular to $e_2$. 
\end{lem}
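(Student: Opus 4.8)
The plan is to argue by contradiction, using the curvature bound to limit how fast $\langle T,v\rangle$ can change: if $|\langle T,v\rangle|$ were ever as large as $3\sqrt{\epsilon}$, it would remain comparably large over an arclength interval of size $\sqrt{\epsilon}$, and integrating would displace $\gamma$ in the $v$-direction by more than the thin tube $S_1\cap S_2$ (the intersection of the two slabs) can accommodate. First I would normalize coordinates: a slab contains the $y$-axis only if its normal is orthogonal to $e_2$, so after a rotation about the $y$-axis I may assume $S_1=\{|x-c_1|\le\epsilon/2\}$ and $S_2=\{|z-c_2|\le\epsilon/2\}$. Since $\gamma\subset S_1\cap S_2$, the $x$-coordinate along $\gamma$ lies in an interval of length $\epsilon$, and likewise the $z$-coordinate; hence for any unit vector $v=\alpha e_1+\beta e_3\perp e_2$ the function $s\mapsto\langle\gamma(s),v\rangle=\alpha x(s)+\beta z(s)$ takes values in an interval of length at most $(|\alpha|+|\beta|)\,\epsilon\le\sqrt{2}\,\epsilon$, i.e. $\sup_\gamma\langle\gamma,v\rangle-\inf_\gamma\langle\gamma,v\rangle\le\sqrt{2}\,\epsilon$. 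This is the only place the hypotheses enter, and it is precisely the equal widths and the orthogonality of the two slabs — so that the cross-section of the tube is a square rather than a long thin parallelogram — that make a single constant work for \emph{every} admissible $v$.

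Next, suppose toward a contradiction that $\langle T,v\rangle(s_0)\ge 3\sqrt{\epsilon}$ for some $s_0$. Since $\gamma$ is unit speed, $\frac{d}{ds}\langle T,v\rangle=\kappa\langle N,v\rangle$, so $\bigl|\frac{d}{ds}\langle T,v\rangle\bigr|\le\kappa\le 1$, and therefore $\langle T,v\rangle(s)\ge 3\sqrt{\epsilon}-|s-s_0|$ for every $s$. In particular $\langle T,v\rangle\ge 2\sqrt{\epsilon}>0$ on the subinterval of length $\sqrt{\epsilon}$ having $s_0$ as an endpoint — taken going forward, or backward if $s_0$ is within $\sqrt{\epsilon}$ of the far endpoint of $\gamma$; this only requires $\gamma$ to have length at least $2\sqrt{\epsilon}$, which is automatic in the application, where $\gamma$ is a long piece of the approximating flow — and integrating the pointwise lower bound shows that the values of $\langle\gamma,v\rangle$ at the two endpoints of that subinterval differ by at least $\int_0^{\sqrt{\epsilon}}(3\sqrt{\epsilon}-u)\,du=\tfrac{5}{2}\epsilon$. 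Thus two points of $\gamma$ have $\langle\cdot,v\rangle$-values differing by at least $\tfrac{5}{2}\epsilon$, and since $\tfrac{5}{2}\epsilon>\sqrt{2}\,\epsilon$ this contradicts the oscillation bound of the first step. Applying the same reasoning to $-v$ rules out $\langle T,v\rangle(s_0)\le-3\sqrt{\epsilon}$, so $|\langle T,v\rangle|<3\sqrt{\epsilon}$ everywhere, as claimed.

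I do not anticipate a genuine obstacle here; the estimate is robust and the constant $3$ is chosen with room to spare (the argument only forces a constant of roughly $2$). The single point requiring some care is the bookkeeping in the first paragraph — establishing the uniform oscillation bound $\sqrt{2}\,\epsilon$ for $\langle\gamma,v\rangle$ in \emph{all} directions $v$ orthogonal to $e_2$, not merely in the two slab-normal directions — since it is this uniformity in $v$, rather than any sharp value of the constant, on which the later interior-angle estimate feeding Lemma~\ref{Areaest} relies.
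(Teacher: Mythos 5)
Your proof is correct and follows essentially the same contradiction argument as the paper: use the curvature bound to keep $\langle T,v\rangle$ large on an arclength interval of size $\sqrt{\epsilon}$, then integrate to produce a displacement in the $v$-direction exceeding what the slab intersection can accommodate. The only difference is that you carry out the oscillation bound $\le\sqrt{2}\,\epsilon$ carefully for an arbitrary unit $v\perp e_2$, whereas the paper somewhat loosely writes ``WLOG $v=e_1$'' but then still invokes the $\sqrt{2}\,\epsilon$ bound, so your treatment tidies up that step.
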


\begin{pf} 

Considering such a curve $\gamma$, fix a point $p$ on it and suppose there is a unit vector $v$ (replacing $v$ with $-v$ if necessary) for which $\langle T(p), v \rangle  >   3\sqrt{\epsilon}$. Without loss of generality, $v = e_1$, coresponding to the $y$ coordinate. By integration and using the curvature bound, at all points less than distance $\sqrt{\epsilon}$ further along the curve we have $\langle T, e_1 \rangle  >   2\sqrt{\epsilon}$. Integrating again then, we get that following along $\gamma$ distance $\sqrt{\epsilon}$ from $p$ implies there is a point $q \in \gamma$ with $\langle q, e_1\rangle  - \langle p, e_1\rangle > 2\epsilon > \sqrt{2} \epsilon$, and hence must be a point which lays outside the slab intersection, giving a contradiction. 
\end{pf}

\section{The construction} 

As discussed in the introduction we begin by constructing the approximate solutions. In their description below, note that there are arbitrary choices made; one probably expects these choices do not have a discernible effect on the limit ancient flow. We first focus on discussing how to construct approximate solutions which are ramps in the sense discussed in section 2 and in particular satisfy (uniform) bounds on $\kappa/ \langle T, V\rangle$ for an appropriately chosen vector $V$ -- which is what gives us curvature estimates. Then we discuss how to arrange the initial data appropriately, in particular the parameter $R$ we introduce below, to get a nontrivial ancient flow.
$\medskip$

First, we give a preliminary definition of the initial data for the approximate solutions, which will then be modified accordingly. Consider the grim reaper $\{G_t\}_{t\in(-\infty, \infty)}$ in the $xy$-plane moving with speed 1 in the direction of $e_2$, and let $(\pm x_R, 0)$ be the intersection of $G_{-R}$ and the $x$-axis. Now we define $P_{R}=(G_{-R}-(x_R, 0)) \cap\{(x, y)| x\le 0\}$ (the first set is a translation of the grim reaper) and let
\[
Q_{R}= (ROT^{y}_{\pi/2}\circ Rot_\pi)(P_R)
\]
where $Rot_\pi$ is a  rotation by $\pi$ in the $xy$-plane around the origin and $ROT^y_{\pi/2}$ is a counterclockwise rotation by angle $\pi/2$ in $\R^3$ around the $y$-axis so that $Q_R\subset \{(x, y, z)| x=0, y\le R, z\ge 0\}$. Then our preliminary version of the initial data for the approximating solutions (parameterized by $R$) is taken to be the curve $\Gamma_R = P_R\cup Q_R$, or in other words two grim reapers with one of their ends cut off and spliced onto the other roughly as indicated in Figure 1 in the introduction. 
$\medskip$

Our goal of controlling $\langle T, V \rangle$ and $\kappa/ \langle T, V \rangle$ appropriately on the initial data of course depends on a good choice of vector $V$, which we discuss next, and which leads naturally to how to refine the preliminary initial data for the approximate solutions.
$\medskip$

Note that $G_{-R}$, since its a time slice of a translator, satisfies $\kappa=-\langle \nu, e_2\rangle =\langle T, e_1\rangle$; indeed this is a key reason why one might imagine being able to control $\kappa/ \langle T, V \rangle$ for some choice of $V$. From this note that on $P_R$ and $Q_R$ respectively we have
\[
P_R: \kappa=\langle T, e_1\rangle >0\,,\,\, Q_R: \kappa=\langle T, -e_3\rangle <0\,.
\]
With this in mind, consider the vector $e=\frac{\sqrt 2}{2} e_1+\frac{\sqrt 2}{2} e_3$. Then, on $P_R$
\[
\langle T, e\rangle =\frac{\sqrt 2}{2}\langle T, e_1\rangle = \frac{\sqrt 2}{2}\kappa>0\,,
\]
and on $Q_R$
\[
\langle T, e\rangle =\frac{\sqrt 2}{2}\langle T, e_3\rangle = -\frac{\sqrt 2}{2}\kappa>0\,.
\]

From these calculations, we see that $\Gamma_R$ almost satisfies being a ramp with $V = e$, although we need to be careful at the point $I$ where we attached $P_R$ and $Q_R$. At $I$, $\Gamma_R$ is not smooth as the left and right limits of $T$ differ although only by $e^{-R}$ up to a scale factor. Naturally one would wish to mollify about the point $I$, but one might worry that control on $\kappa/\langle T, V\rangle$ could be lost, so we take a more geometric route. Note that one may slightly bend $Q_R$ slightly along the vector $e_1 - e_3$ near $I$, and that any amount of bending will make the curve enter the quarterspace $\{(x,y,z) \mid x > 0, z < 0\}$. Similarly we may slightly bend $P_R$ into $\{(x,y,z) \mid x < 0, z > 0\}$. From the direction of the bending we see that the lower bound on $\langle T, V \rangle$ will be preserved, and by bending slightly enough we may arrange that $\kappa/\langle T, V\rangle \le 2$ holds. After a (slight) bending, we may consider a slight ``upward'' translation $\widetilde{Q_R}$ of $Q_R$ in the $yz$ plane and a slight translation $\widetilde{P_R}$ of $P_R$ in the $xy$ plane (note this doesn't affect the above inequalities) so that the ends of the bent curves match, and so that the resulting curve is smooth with
\[
\langle T, V\rangle>0\,,\,\, \frac{\kappa}{\langle T, V\rangle}\le 2\,.
\]
By abusing notation, we still use $\Gamma_R$ to denote the modified smooth curve. Note, since the deformation above can be taken to be of the order of $e^{-R}$, for any $\Gamma_R$ we may arrange it to be close in Hausdorff distance to $P_R \cup Q_R$ of the order of $e^{-R}$. Now at this point the initial data for the approximate solutions are well-controlled ramps (with $V = e$), and we have to ensure that they do stay ramps into the future, which has some complications due to their noncompactness. Because the terms involved tend to zero at the ends of $\Gamma_R$, there appear to be issues with applying a noncompact maximum principle though (at least the well known ones).
$\medskip$

Instead, we bend/flare out the ends of $\Gamma_R$ (and relabel back), so that it is asymptotic to two lines $\ell_1$ and $\ell_2$ which satisfy $\langle T, V\rangle =1$ on them. By continuity and geometric reasoning we can also arrange that $\langle T, V\rangle > 0$ is preserved where the bending occurs. To set notation for the sequel we may arrange furthermore that $\Gamma_R$ above is left unperturbed within the ball of radius $R_{bend} >> R$, where we can choose $R_{bend}$ to be as large as we wish. Considering the evolution $(\Gamma_R)_t$ of $\Gamma_R$ under the flow, by pseudolocality \cite{CY}, we may arrange that  $1-\langle T, V\rangle $ and $\kappa/\langle T, V\rangle$ is as small as we wish sufficiently far from the origin, the distance one must go depending on $t$. This gives that $\kappa/\langle T, V\rangle\le 2$ (with the bending done gradually) is preserved along the flow by the standard parabolic maximum principle. In particular, the flow of $\Gamma_R$ is smooth for all time. 
$\medskip$

With these $(\Gamma_R)_t$ in hand we recall that to construct a nontrivial (in the sense as discussed in the introduction) ancient flow, it will suffice to produce a sequence $M^i_t$ of flows such that
\begin{enumerate} 
\item the flows $M^i_t$ are defined on time intervals $[-T_i, 0]$ with $T_i \to \infty$.
\item $|\kappa| < C$, for a uniform constant $C$ independent of time, on the $M^i_t$.
\item There is a uniform ball $B(0, \overline{R})$ and time $t$ so that $M^i_{t} \cap B(0, \overline{R})$ are all connected curves and ``far'' from being planar in that there is a uniform positive lower bound in Hausdorff distance from them to any plane intersected with $B(0,\overline{R})$. 
\end{enumerate} 
Of course, item (2) implies (1); as we'll obviously construct the $M^i_t$ in terms of the $(\Gamma_R)_t$ the main point will be to check that we may arrange item (3) for a sequence with $T_i \to \infty$. 
$\medskip$

We claim, that if we pick $R_i = 10^i$, then the following is true
\begin{itemize}
\item[(a)] there is some $\overline{R} > 0$ for which  $\partial B(0, \overline{R}) \cap (\Gamma_{R_i})_{10^i - 1000}$ consists of two points (corresponding to the two asymptotic lines essentially, by taking $R_{Bend}$ sufficiently large which we may), and 
\item[(b)] $B(0, 100) \cap (\Gamma_{R_i})_{10^i - 1000}$ is distance less than 1/10 to the concatenation of two curves laying in the $xy$ and $yz$ planes, which have points distance at least 9/10 away from the line $\{y = 0\}$ (of course these are essentially the evolutions of $P_R$ and $Q_R$). 
\end{itemize} 
Item (b) shows that $ (\Gamma_{R_i})_{10^i - 1000}$ is nonplanar, and (a) shows that it is connected. The second statement (b) will be evident from the proof of (a), because of our use of barriers.
$\medskip$

To see (a), we consider the compact pieces of $(\Gamma_{R_i})_t\cap \{\pm y>0\}$ and let  $(p^t_1, p^t_2)$ and $(q^t_1, q^t_2)$ be the corresponding endpoints (note that these are in fact three distinct points). Let now $D^{1,i}_t$ and $D^{2,i}_t$ be the two piecewise smooth curves formed by the two compact pieces of the curve along with two line segments joining the endpoints (essentially, the boundaries of the orange shaded regions in Figure 1).
$\medskip$

To control the interior angles at the corner points we will use Lemma \ref{convbarrier} with Angenent cylinders as barriers for $(\Gamma_{R_i})_t$ by arranging them to form boundaries of slabs for use in Lemma \ref{smallangle} as indicated in Figure 2; by combining it with Lemma~\ref{ovalfacts} (which controls how quickly the slab intersection will widen) we can ensure the function $\ve(t)$ in Lemma \ref{Areaest} to be bounded by $2e^{-(R_i - t)/2}$; in particular it is integrable. More specifically, to control each corner point we use 6 Angenent cylinders $A_{t-R_i}$, with 2 ``threaded'' through the $P_{R_i}$ and $Q_{R_i}$, and the other four placed opposingly, so that they ``enclose'' $P_{R_i}$ and $Q_{R_i}$. Because the ends of the $\Gamma_{R_i}$ are flared clearly these can be arranged to be disjoint from the initial data, and by Lemma \ref{convbarrier} they will remain disjoint for all later times. Because the initial areas of the minimal disks bounded by $D^{1,i}_t$ and $D^{2,i}_t$ are up to a uniform additive constant $\pi R_i$, by integrating the estimate of Lemma \ref{Areaest} we find that at $t = 10^i - 1000$ the area of the minimal disk enclosed by $D^{1,i}_t$ and $D^{2,i}_t$ is bounded by a constant independent of $i$. 
 $\medskip$
 
 \begin{figure}
\centering
\includegraphics[scale = .45]{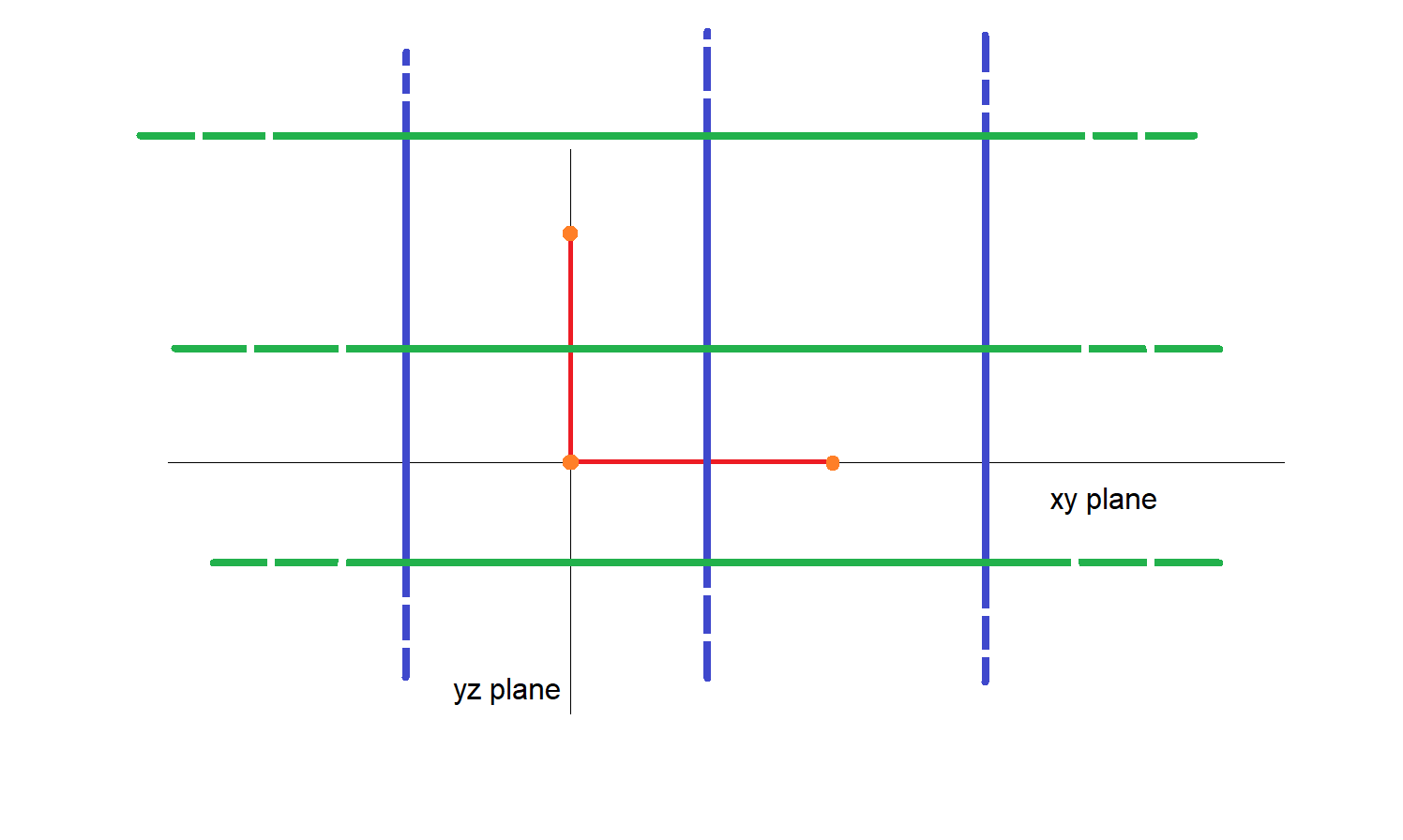}
\caption{A side view of the configuration of the barriers, looking along the $y$-axis. Here the red ``L'' represents the initial data (the initial data can't lay precisely in the planes, but we can arrange it to be as close as we wish), the orange dots indicate approximately where the points $(p_1, p_2)$ and $(q_1, q_2)$ would be (drawn as one expects, that two of the points are the same or at least very close to each other), and the blue and green lines indicate the $\R$ factors of the Angenent cylinders. Note each of the orange points lay in a slab intersection defined by these cylinders. }
\end{figure}

 We now claim that this area estimate implies that at $t = 10^i - 1000$ the minimal disks bounded by $D^{1,i}_t$ and $D^{2,i}_t$ are both contained in $B(0, \overline{R})$ for some large uniform choice of $\overline{R}$. To see this we will show that the curves $ D^{1,i}_t$ and $  D^{2,i}_t$ have a canonical tubular neighborhood for $t \in [0, 10^i - 1000]$ of uniform size (i.e. independent of $t$ and $i$). Intersecting this neighborhood with the minimal disks bounded by $D^{1,i}_t$ and $D^{2,i}_t$ we find that there is a uniform $c$, such that $c\Length(D^{1,i}_t)$ and $c\Length(D^{1,i}_t)$ are bounded by the areas of the minimal disks bounded by $D^{1,i}_t$ and $D^{2,i}_t$ respectively, giving the desired bound. 
 $\medskip$
 
 To see that this is true, first note by considering Angenent cylinders translated so that they are centered by/closer to the ``tips'' of the initial data, we can arrange for any $\delta > 0$ that $ D^{1,i}_t$ and $  D^{2,i}_t$ are both contained in slabs of width $\delta$ for $t \in [0, 10^i - 1000]$ for $i$ large enough, potentially changing $10^i - 1000$ by a uniform additive constant. By the curvature bounds on the $(\Gamma_{R_i})_t$, this implies arguing exactly as in Lemma \ref{smallangle} that $ D^{1,i}_t$ and $ D^{2,i}_t$ are graphical over the $xy$ and $yz$ planes respectively, with bounded curvature away from the (projections of the) corner points $p_1^t, p_2^t, q_1^t, q_2^t$. Denote the projections of their boundaries onto these planes by $\sigma^{1,i}_t$, $\sigma^{2, i}_t$. 
 $\medskip$

From our discussion on the choice of $V$, note that the $\Gamma_{R_i}$ are also simultaneously ramps with respect to the vectors $ae_1 + be_3$ for any $a, b \geq 0$ and that this will be preserved under the flow (although we see if initially we chose in defining $V$ to have $a$ or $b$ to be zero this would not lead to curvature bounds -- this is not a concern here though). This gives that on $\sigma^{1,i}_t$ we have $\langle T, e_1 \rangle \geq 0$ and on $\sigma^{2,i}_t$ we have $\langle T, e_3 \rangle \geq 0$ (so are ramps in these planes). Clearly to show $ D^{1,i}_t$ and $ D^{2,i}_t$ have tubular neighborhoods it suffices to show $\sigma^{1,i}_t$, $\sigma^{2, i}_t$ do. Now we see that $\sigma^{1,i}_t$, $\sigma^{2, i}_t$ are contained individually in slabs of width little more than $\pi$ in their respective planes. Also by the curvature bounds we see the only way these curves don't have uniform tubular neighborhoods are if for each $\ve > 0$ there is an $i$ and corresponding $t_i$ for which on $\sigma^{1,i}_{t_i}$ or $\sigma^{2,i}_{t_i}$ we have two sheets (that is, locally graphical regions) which are distance less than $\ve$ apart. Then, there are two such sheets with  tangent vectors pointing in opposite directions, so by the conditions $\langle T, e_1 \rangle > 0$ and $\langle T, e_3 \rangle > 0$ there must be a point in between these two sheets along the curve with large curvature depending on $\ve$. Taking $\ve$ small enough violates the curvature bounds along the $(\Gamma_{R_i})_t$, giving us what we want. 
 $\medskip$

 Time translating the flows $(\Gamma_{R_i})_t$ by $-T_i = 10^i - 1000$ gives us the ``old-but-not-ancient'' solutions $M^i_t$ with properties (1)-(3), completing the construction.

\end{document}